\numberwithin{equation}{section}
\theoremstyle{plain}
\newtheorem{theorem}[equation]{Theorem}
\newtheorem{proposition}[equation]{Proposition}
\newtheorem{conjecture}[equation]{Conjecture}
\theoremstyle{remark}
\theoremstyle{definition}
\title{Lens spaces as dual complexes of Log Calabi-Yau pairs}
\author{Morgan V Brown}
\address{Department of Mathematics, University of Miami, Coral Gables, FL 33146 USA}
\email{mvbrown@math.miami.edu}
\begin{document}

\begin{abstract}
We demonstrate the construction of singular log Calabi-Yau $4$-folds such that the dual complex of the boundary is homeomorphic to a Lens space from a log Calabi-Yau surface with action of a finite cyclic group. We explicitly obtain the Lens spaces $L(3,1)$, $L(5,1)$, and $L(5,2)$ in this way.

\end{abstract}
\maketitle
\section{Introduction}

Given a normal crossing pair $(X,\Delta)$, we may assign the dual intersection complex $\mathcal{D}(\Delta)$, whose points correspond to components of $\Delta$ and whose higher dimensional cells correspond to connected components of the intersections of divisors of $\Delta$. These complexes furnish a combinatorial invariant which encodes information relevant to many constructions in algebraic geometry giving connections to Hodge theory, birational geometry, non-archimedean geometry, and the mirror symmetry program.

The question of exactly what information about $(X,\Delta)$ can be reconstructed from $\mathcal{D}(\Delta)$ is an area of active research. In the context of birational geometry, it is known that for log crepant maps, the PL homeomorphism type of the dual complex is invariant \cite{deFernexKollarXu2012}. In Hodge theory, the $\mathbb{Q}$-cohomology groups of $\mathcal{D}(\Delta)$ can be recovered from the weight filtration on the cohomology of $X$ induced by $(X,\Delta)$ \cite{Payne}.

We will say that a pair $(X,\Delta)$ is log Calabi-Yau if $X$ is proper, $K_X+\Delta$ is numerically trival and $(X,\Delta)$ is log canonical. We associate to $(X,\Delta)$ log canonical the PL homeomorphism type of the dual complex of coefficient $1$ divisors of any dlt minimal model. See section \ref{backgd} for details.

Under mild conditions, it is expected that if $(X,\Delta)$ is log Calabi-Yau of dimension $d$, then the dual complex is homeomorphic to a quotient of a sphere $S^{d-1}$. This is known to be true for dimension $d\leq 4$ 
\cite{KollarXu}. The goal of this note is to furnish constructions of examples of log Calabi-Yau varieties of dimension $4$ such that the dual complex is a Lens space $L(3,1)$, $L(5,1)$, or $L(5,2)$.

More generally, suppose we have a log Calabi-Yau surface $(X,\Delta)$ with dual complex $S^1$, along with an action of $\mathbb{Z}/n\mathbb{Z}$ which is transitive on the dual complex generated by $\psi$. Choose $k$ relatively prime to $n$. We construct $(Y,\Gamma)$ as the quotient of $(X\times X, \pi_1^*\Delta+\pi_2^*\Delta)$ under $(\psi, \psi^k)$. The pair $(Y,\Gamma)$ is a singular log Calabi-Yau variety. We show in Thm \ref{mainhomeomorphism} that the dual complex of any dlt minimal model is homeomorphic to the Lens space $L(n,k)$. It is straightforward to find examples of log Calabi-Yau surfaces with appropriate action of $\mathbb{Z}/3\mathbb{Z}$ and $\mathbb{Z}/5\mathbb{Z}$.

However there is no representation of the group $\mathbb{Z}/p\mathbb{Z}$ as a subgroup of the automorphisms of a rational surface for a prime $p \geq 7$ \cite{Zhangauto}. This limits the ability of the construction to produce more complicated Lens spaces, but it is conceivable there is some other construction of log Calabi-Yau $4$-folds. I pose in Conjecture \ref{conjprimes} that this is not possible for sufficiently large primes $p$, with $p=7$ being the first open case.

\subsection*{Acknowledgements} The author's work was supported by Simons Foundation Collaboration Grant 524003. The author is very grateful to the Institute for Mathematics and Informatics at the Bulgarian Academy of Sciences (IMI-BAS) for hosting his visit during the completion of this project. During this visit the author was supported by the Bulgarian Ministry of Education and Science, Scientific Programme "Enhancing the Research Capacity in Mathematical Sciences (PIKOM)", No. DO1-67/05.05.2022.



\section{Preliminaries} \label{backgd}
We give a short background on dual complexes in the context of birational geometry following \cite{deFernexKollarXu2012, Hatcher}. A regular cell complex of dimension $m$ is defined inductively as follows. A dimension $0$ regular cell complex is a disjoint union of vertices. Given a dimension $j-1$ cell complex $C_{j-1}$, we may obtain a dimension $j$ one by attaching a collection of simplices of dimension $j$, such that the boundary of each simplex embeds into $C_{j-1}$.

For a dlt pair $(X,\Delta)$ we write $\Delta^{=1}$ to be the coefficient $1$ divisors of $\Delta$. As $(X,\Delta)$ is dlt, each component of $\Delta^{=1}$ is normal, and for any $j$ components of the $\Delta^{=1}$, the intersection is either empty or is a disjoint union of irreducible varieties of codimension $j$ in $X$. Call such a component of such an intersection a stratum $v$ of $\Delta^{=1}$. Then the dual complex $\mathcal{D}(\Delta^{=1})$ is a regular cell complex whose vertices correspond to the components of the $\Delta^{=1}$, and whose $j-1$ cells correspond to the codimension $j$ strata.

To a stratum $v$ we associate the cell $\sigma_v$. If $v$ is of codimension $j$, then there are exactly $j$ divisors containing $v$, and every stratum $v'$ properly containing $v$ can be recovered uniquely from $v$ and the subset of the $\Delta^{=1}$ containing $v'$. This furnishes the attaching map identifying the boundary of $\sigma_v$ with the union of the $\sigma_v'$.

These complexes behave well with respect to crepant birational maps: Given a map $f\colon X' \to X$, if $(X',\Delta')$ and $(X,\Delta)$ are dlt pairs, and $f^*(K_X+\Delta)\sim K_{X'} + \Delta'$ then $\mathcal{D}(\Delta^{=1})$ and $\mathcal{D}(\Delta'^{=1})$ are canonically PL-homeomorphic \cite[Prop 11]{deFernexKollarXu2012}.

If $(Y,\Gamma)$ is not dlt, we can assign to $\mathcal{D}(Y,\Gamma)$ the PL-homeomorphism type of the dual complex of the log canonical centers of any dlt minimal model. This is especially of interest when $(Y,\Gamma)$ is a log canonical pair. See \cite{BrownDC} for discussion of the problem of direct computation of the dual complex of a $3$-fold log canonical pairs.

Let $(X,\Delta)$ be dlt with $\Delta=\Delta^{=1}$, and suppose $G$ is a finite group with action on $X$ preserving the divisor $\Delta$. We have an induced action of $G$ on the dual complex $\mathcal{D}(\Delta)$, and we assume that this action is free, that is, every nontrivial element of $G$ acts with no fixed points on $\mathcal{D}(\Delta)$. To understand the quotient it is convenient to replace $\mathcal{D}(\Delta)$ with its barycentric subdivision. Such an operation corresponds to blowing up the strata of $\Delta$ in sequence by dimension, see \cite[Remark 10]{deFernexKollarXu2012}. Call the resulting pair $(X',\Delta')$. The action of $G$ extends to an action on $(X',\Delta')$, and therefore an action on the complex $\mathcal{D}(\Delta')$, which is the barycentric subdivision of $\mathcal{D}(\Delta)$. This complex $\mathcal{D}(\Delta')$ is a simplicial complex, enjoying the property that the intersection of any two cells is a face of both.

Let $(Y',\Gamma')$ be the quotient of $(X',\Delta')$ under the action of $G$.

\begin{proposition}\label{dlt}
The pair $(Y',\Gamma')$ is dlt.
\end{proposition}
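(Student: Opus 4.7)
The plan is to establish freeness of the $G$-action on the variety $X'$ itself so that the quotient $\pi\colon X'\to Y'$ is étale, and then to deduce the dlt property of $(Y',\Gamma')$ by étale descent.

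To translate the hypothesis into algebraic terms, observe that passing to the barycentric subdivision does not change the underlying topological space of the dual complex, so $G$ still acts freely on $\mathcal{D}(\Delta') = \mathcal{D}(\Delta)$. Any simplicial self-map of a simplex fixes its barycenter, so no nontrivial $g \in G$ can stabilize a simplex of $\mathcal{D}(\Delta')$; equivalently, no nontrivial element of $G$ preserves any stratum of $\Delta'$.

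The central step is to promote this to freeness on $X'$. If $g \in G$ fixes a closed point $p \in X'$, let $v$ be the minimal log canonical center of $(X',\Delta')$ through $p$; existence and uniqueness of such a $v$ is a standard feature of dlt pairs, since the components of $\lfloor\Delta'\rfloor$ through $p$ meet along a well-defined smallest stratum. Then $g(v)$ is an lc center containing $p$, so $g(v) \supseteq v$ by minimality, and finiteness of $G$ forces $g(v)=v$; the previous paragraph then gives $g=1$. Once freeness is in hand, $\pi$ is étale and the reduced divisor $\Gamma' := \pi_{\ast}\Delta'$ satisfies $\pi^{\ast}(K_{Y'}+\Gamma')=K_{X'}+\Delta'$ with $\Gamma'=\Gamma'^{=1}$. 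An snc open subset of $(X',\Delta')$ containing every lc center descends through $\pi$ to an snc open subset of $(Y',\Gamma')$ covering every lc center of the quotient, completing the verification.

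The main obstacle is the middle step, which turns a free action on the combinatorial dual complex into a free action on the variety; it rests on the uniqueness of the minimal lc center through a point of a dlt pair, after which the remaining étale descent is routine.
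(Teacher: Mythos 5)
Your central step does not go through: the hypothesis is only that $G$ acts freely on the dual complex $\mathcal{D}(\Delta')$, and this does \emph{not} imply that $G$ acts freely on $X'$. Your minimal-lc-center argument correctly shows that no nontrivial $g$ fixes a point of $\operatorname{Supp}\Delta'$ (the unique minimal lc center through such a point would be a $g$-stable stratum, contradicting freeness on the complex), but a fixed point $p$ lying \emph{off} the boundary has no lc center through it, and the argument says nothing there. This is not a hypothetical worry: in the paper's own examples the action has fixed points away from the boundary (e.g.\ for $X=\mathbb{P}^2$ with the coordinate cyclic action, $[1{:}1{:}1]$ is fixed, hence so is the corresponding point of $X\times X$, and it survives to $X'$ since the blow-ups only modify the strata). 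The paper explicitly describes $(Y,\Gamma)$ as singular; the quotient map is not \'etale, so the global \'etale-descent conclusion collapses.

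The repair is to localize: dlt is a condition at the generic points of lc centers, so one only needs the quotient to be well behaved there. Your argument does give a $G$-invariant open neighborhood of $\operatorname{Supp}\Delta'$ on which the action is free, hence the quotient is \'etale near every stratum; away from $\operatorname{Supp}\Gamma'$ the quotient has finite quotient singularities with $\Gamma'=0$ there, which is klt and contributes no lc centers, so it cannot obstruct dltness. This is essentially the paper's route, which works analytically locally at the generic point of each stratum. One further point you gloss over even in the \'etale-descent step: two distinct components of $\Delta'$ through a stratum $v$ could a priori map to the \emph{same} component of $\Gamma'$, which would ruin simple normal crossings at the image of $v$ (a self-intersecting component is not snc). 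The paper rules this out using the barycentric subdivision: the divisors of $\Delta'$ cutting out $v$ correspond to barycenters of faces of different dimensions of the original complex, so their images in $Y'$ are distinct. Your proof needs both the localization and this distinctness argument to be complete.
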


\begin{proof}
Any lc center of $(Y',\Gamma')$ is the image of an lc center of $(X',\Delta')$, which means it is a stratum, call it $v$. The condition that $G$ act freely on the dual complex implies that analytically locally around the generic point of $v$, the image of $v$ is smooth and is cut out by the images of the divisors in $\Delta'$ cutting out $v$. Each of these divisors has an image in $\Delta$ of a different codimension by the barycentric subdivision construction. Hence their images are distinct in $Y'$. Thus the image of $v$ is a smooth complete intersection of the distinct components of $\Gamma'$ containing it. So $(Y',\Gamma')$ is dlt.

\end{proof}

\begin{proposition}\label{quotient}
The complex $\mathcal{D}(\Gamma')$ is identified with the quotient $\mathcal{D}(\Delta')/G$.
\end{proposition}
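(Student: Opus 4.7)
The plan is to produce a natural bijection between the cells of $\mathcal{D}(\Gamma')$ and the $G$-orbits of cells of $\mathcal{D}(\Delta')$, and then verify that this bijection respects the attaching maps. First, I would use the quotient map $q\colon X' \to Y'$ to send a stratum $v$ of $(X',\Delta')$ to the stratum $q(v)$ of $(Y',\Gamma')$. Every stratum of $\Gamma'$ arises this way, since any lc center of $(Y',\Gamma')$ is the image of an lc center of $(X',\Delta')$ by the argument already used in Proposition \ref{dlt}. Two strata $v$, $v'$ of $\Delta'$ have the same image if and only if they lie in the same $G$-orbit: one direction is clear from equivariance, and the other follows from the local description in the proof of Proposition \ref{dlt}, which showed that near a stratum the map $q$ is étale onto its image and identifies the components of $\Delta'$ through $v$ with distinct components of $\Gamma'$ through $q(v)$.

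Next I would translate this bijection of strata into a bijection of cells. Because we have passed to the barycentric subdivision, $\mathcal{D}(\Delta')$ is simplicial with the property that any two cells meet in a common face, and the hypothesis that $G$ acts freely on $\mathcal{D}(\Delta')$ means no simplex is setwise preserved by a nontrivial element of $G$ and no two vertices of a single simplex lie in the same $G$-orbit. Under these conditions the quotient $\mathcal{D}(\Delta')/G$ inherits a canonical regular (in fact simplicial) cell structure whose cells are precisely the $G$-orbits of cells of $\mathcal{D}(\Delta')$, with attaching maps induced from those of $\mathcal{D}(\Delta')$. A face relation $\sigma_{v'} \subseteq \partial\sigma_{v}$ in $\mathcal{D}(\Delta')$ corresponds to a containment $v \subseteq v'$ of strata, which is preserved by $q$; so the bijection on orbits intertwines the face posets of $\mathcal{D}(\Delta')/G$ and $\mathcal{D}(\Gamma')$.

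Finally I would assemble these into a cellular map $\Phi\colon \mathcal{D}(\Delta')/G \to \mathcal{D}(\Gamma')$ which is a bijection on cells and a homeomorphism on each closed cell, hence an isomorphism of regular cell complexes. The main technical point I expect to need care is ruling out any extra identifications in $\mathcal{D}(\Gamma')$ that do not come from the $G$-action: a priori one worries that two $G$-inequivalent strata of $\Delta'$ could merge in the quotient, or that a cell of $\mathcal{D}(\Gamma')$ could have its boundary identified in a way not predicted by the quotient. Both possibilities are ruled out by freeness on the barycentrically subdivided complex together with the étale-local picture around each stratum established in Proposition \ref{dlt}; this is the step where one genuinely uses the barycentric subdivision rather than just the original dual complex.
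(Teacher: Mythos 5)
Your proposal is correct and follows essentially the same route as the paper: both send a stratum $w$ of $\Delta'$ to its image under the quotient map, check surjectivity and compatibility with the face relations, and observe that two strata (hence two points of the realization) are identified exactly when they are $G$-related, with freeness on the barycentric subdivision guaranteeing the quotient carries the expected cell structure. You spell out somewhat more explicitly than the paper why no extra identifications can occur, but this is a refinement of the same argument rather than a different one.
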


\begin{proof}
Let $\phi \colon X' \to Y'$ be the quotient map. We construct the map $f$ exhibiting $\mathcal{D}(\Gamma')$ as $\mathcal{D}(\Delta')/G$. For any stratum $w$ of $\Delta'$, $f$ maps $\sigma_w$ isomorphically to $\sigma_(\phi(w))$, by identifying each stratum containing $w$ with its image under $\phi$. This assignment is compatible with the face maps, and ever stratum of $\Gamma'$ is the orbit of a stratum of $\Delta'$, so $f$ gives a surjective map from $\mathcal{D}(\Delta') \to \mathcal{D}(\Gamma')$. Finally, note that two points of $\mathcal{D}(\Delta')$ have the same image under $f$ if and only if they are related by an element of $G$, hence $\mathcal{D}(\Gamma')$ is identified with $\mathcal{D}(\Delta')/G$. 
\end{proof}

We give a brief review of the construction of Lens spaces \cite{Bredon}: Consider the sphere $S^3$ as the locus in $\mathbb{C}^2$ cut out by $|z|^2+|w|^2=1$. Consider positive integers $k,n$ with $k$ relatively prime to $n$. Then the action of $\mathbb{Z}/n\mathbb{Z}$ on $S^3$ induced by the generator $(z,w)\to (e^{2\pi \mathrm{i} /n }z, e^{2\pi k \mathrm{i}/n}w)$ is fixed point free. The topological quotient is an orientable $3$-manifold called a Lens space, denoted $L(n,k)$. The Lens spaces $L(5,1)$ and $L(5,2)$ are a well known example of two manifolds of the same dimension with isomorphic fundamental and homology groups but not homotopy equivalent \cite[VI, Thm 10.15]{Bredon}.


\section{Main Construction} \label{mainc}

Define $M_n$ as a simplicial complex on points $\Delta_{1,1}, \ldots \Delta_{1,n}, \Delta_{2,1} \ldots \Delta_{2,n}$. A simplex $\sigma$ belongs to $M_n$ if and only if for all $i,j\in \mathbb{Z}/n\mathbb{Z}$ whenever $\Delta_{1,i}$ and $\Delta_{1,j}$ are in $\sigma$ or $\Delta_{2,i}$, and $\Delta_{2,j}$ are in $\sigma$, then $i,j$ are consecutive in the group $\mathbb{Z}/n\mathbb{Z}$, that is either $i=j+1$ or $j=i+1$.

Let $(X,\Delta)$ be a surface pair such that the components of $\Delta$ form an $n$ cycle. Then the product $(X\times X,\pi_1^*(\Delta)+\pi_2^*(\Delta))$ has dual complex identified with $M_n$.

Now we identify the topological realization $|M_n|$ with a subset $L_n$ of Euclidean space $\mathbb{R}^{2n}$, with coordinates $Y_{1,1}, \ldots Y_{1,n}, Y_{2,1}, \ldots Y_{2,n}$. Let $S_n$ be the simplex defined by $Y_{\mu,i}\geq 0$ and $\sum Y_{\mu,i}=1$. Each point $\Delta^{\mu, i}$ of the simplicial complex has a corresponding coordinate $Y_{\mu,i}$. For a simplex $\sigma$, we identify $|\sigma|$ with the locus of points in $S_n$ satisfying $Y_{\mu,i}=0$ if $\Delta_{\mu,i}\notin \sigma$. This identification is compatible with the face relations so we have identified $|M_n|$ with a subset of $S_n$, which we call $L_n$.

Our next goal is to define a homeomorphism between $L_n$ and the three sphere in $\mathbb{C}^2$ defined by $|z|^2+|w|^2=1$. For $x_1, x_2$ satisfying $x_1+x_2\leq 1, x_1 \geq 0, x_2, \geq 0$, we define 
\[
f_n(x_1,x_2)=\sin(\pi/2(x_1+x_2))e^{(2\pi \mathrm{i}/n)(x_2/(x_1+x_2))})
\], if at least one of the $x_i \neq 0$, and define $f(0,0)=0$. We see that $f_n$ is a continuous function with complex codomain. Now, for a point $P$ of $L_n$, there exist $i,j$ such that $Y_{1,i}+Y_{1,i+1}+Y_{2,j}+Y_{2,j+1}=1$ and so all other coordinates must be zero. Define 

\[
F^{i,j}_n(P)=(e^{2\pi\mathrm{i}\cdot(i/n)}f_n(Y_{1,i},Y_{1,i+1}), e^{2\pi\mathrm{i}\cdot(j/n)}f_n(Y_{2,j},Y_{2,j+1}))
\]
\begin{proposition}
The $F^{i,j}_n$ extend to a homeomorphism $F_n \colon L_n \to S^3$.
\end{proposition}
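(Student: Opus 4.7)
The plan is to exploit the fact that $L_n$ is the join of the two $n$-cycles on $\{\Delta_{1,\bullet}\}$ and $\{\Delta_{2,\bullet}\}$, which matches the join decomposition $S^3 \cong S^1_z \ast S^1_w$ coming from the $(z,w)$-coordinates, and to construct an explicit two-sided inverse to $F_n$. First I would observe that the maximal simplices of $M_n$ are the tetrahedra $\sigma_{i,j}$ spanned by $\{\Delta_{1,i},\Delta_{1,i+1},\Delta_{2,j},\Delta_{2,j+1}\}$, and that $F^{i,j}_n$ sends $\sigma_{i,j}$ onto the closed region of $S^3$ cut out by $\arg z \in [2\pi i/n, 2\pi(i+1)/n]$ and $\arg w \in [2\pi j/n, 2\pi(j+1)/n]$, with the two boundary $n$-cycles collapsing onto the circles $\{w=0\}$ and $\{z=0\}$ respectively. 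These $n^2$ regions tile $S^3$.

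Next I would verify compatibility on overlaps to produce a well-defined continuous $F_n$. The key identity is
\[
f_n(t,0)=\sin(\pi t/2),\qquad f_n(0,t)=e^{2\pi\mathrm{i}/n}\sin(\pi t/2),
\]
so the phase shift $e^{2\pi\mathrm{i}/n}$ exactly absorbs the index shift $e^{2\pi\mathrm{i}(i-1)/n}\mapsto e^{2\pi\mathrm{i} i/n}$ whenever a point with $Y_{1,i+1}=0$ in $\sigma_{i,j}$ is reinterpreted as the same point with $Y_{1,i-1}=0$ in $\sigma_{i-1,j}$; the analogous identity in the second factor handles $j$-shifts. The degenerate faces where an entire block vanishes (for instance, the vertex $\Delta_{1,i}\mapsto(e^{2\pi\mathrm{i} i/n},0)$ independently of $j$) are covered by $f_n(0,0)=0$. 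Continuity of the resulting $F_n$ follows from continuity on each closed cell together with the fact that the finite complex $L_n$ carries the weak topology.

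For bijectivity I would write down the inverse directly. Given $(z,w)\in S^3$ there is a unique $s\in[0,1]$ with $|z|=\sin(\pi s/2)$ and $|w|=\cos(\pi s/2)$, and (when $z,w$ are both nonzero) unique $i,j\in\{0,\ldots,n-1\}$ and $\alpha,\beta\in[0,1)$ with $\arg z=2\pi(i+\alpha)/n$ and $\arg w=2\pi(j+\beta)/n$. The preimage is then the point of $\sigma_{i,j}$ with coordinates $(s(1-\alpha),\,s\alpha,\,(1-s)(1-\beta),\,(1-s)\beta)$; the boundary cases ($\alpha=0$, $\beta=0$, $z=0$, or $w=0$) produce ambiguities in the choice of $(i,j)$ that correspond precisely to the face identifications verified in the previous paragraph. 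Since $L_n$ is compact as a finite simplicial complex and $S^3$ is Hausdorff, the continuous bijection $F_n$ is automatically a homeomorphism. The main obstacle is the bookkeeping in the compatibility step, where shared triangular faces, edges, and vertices must each be checked separately, although every case reduces to the single phase-absorption identity displayed above.
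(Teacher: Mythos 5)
Your argument is correct and follows essentially the same route as the paper: check that the $F^{i,j}_n$ agree on overlaps (the phase-absorption identity $f_n(0,t)=e^{2\pi\mathrm{i}/n}\sin(\pi t/2)$ is exactly the paper's well-definedness check), exhibit an explicit inverse by recovering the coordinates from the moduli and arguments of $z$ and $w$, and conclude via the compact-to-Hausdorff criterion. The join decomposition $L_n\cong S^1\ast S^1\cong S^3$ is a pleasant conceptual framing, and your inverse formula $(s(1-\alpha),s\alpha,(1-s)(1-\beta),(1-s)\beta)$ is a more explicit version of the paper's recovery argument, but the substance is the same.
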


\begin{proof}
First, we note that for any point $P$ of $L_n$, the function $F_n$ is well defined. The only possible ambiguity in the first coordinate $z$ of $F_n$ is if exactly one of the $Y_{1,i}$ are nonzero. But in this case whichever option we use we obtain $z=\sin(\pi/2(Y_{1,i}))e^{2\pi\mathrm{i}\cdot(i/n)}$.

Next, we note that the output $(z,w)$ of $F_n$ always satisfies $|z_1|^2+|z_2|^2=1$. So we may restrict the codomain of $F_n$ to $S^3$.

To show that $F_n$ is a homeomorphism, we make use of the well known fact that a continuous bijection from a compact space to a Hausdorff space is a homeomorphism. As $f_n$ was continuous, so is $F_n$, so we need only show that $F_n$ is a bijection. We do so by describing how to construct the unique preimage of any point on the unit sphere $|z_1|^2+|z_2|^2=1$.

If $z_\mu \neq 0$, then $\frac{z_\mu}{|z_\mu|}$ is a point on the unit circle. If it is the root of unity $e^{(2\pi \mathrm {i})(j/n)}$, then $Y_{\mu,j}$ is nonzero but all the other $Y_{\mu,j'}$ are $0$. If the phase of $\frac{z_\mu}{|z_\mu|}$ lies strictly between the roots of unity numbered $j$ and $j+1$, then $Y_{\mu,j}$ and $Y_{\mu,j+1}$ are nonzero and their ratio can be uniquely recovered from the phase.

If the other coordinate $z_\lambda=0$, then all $Y_{\lambda,j}=0$ and we have uniquely recovered a point of $L_n$.
Both $z_1$ and $z_2$ cannot be $0$.

If $z_1$ and $z_2$ are nonzero, we have seen that we can recover uniquely from the phases the ratios of $Y_{1,i}$ to $Y_{1,i+1}$ and that of $Y_{2,j}, Y_{2,j+1}$. The quantity $Y_{1,i}+Y_{1,i+1}$ is recovered uniquely from the magnitude of $z_1$, and this is enough information to determine the rest.
\end{proof}

Now, let Let $(X,\Delta)$ be an snc log Calabi-Yau surface, such that the dual complex of $\Delta$ is an $n$-cycle. Let $\psi$ be an order $n$ automorphism of $X$ which cyclically permutes the components of $\Delta$. We assume also that $\psi$ fixes no divisor of $\Delta$. Let $X\times X$ be the product, and let $\Delta_1, \Delta_2$ be the two pullbacks of $\Delta$. Choose $1\leq k <n$ relatively prime to $n$, and let $\Psi_k$ be the automorphism given by $\psi$ on the first factor and $\psi^k$ on the second.

Define $(Y,\Gamma)$ to be the log canonical pair given by the quotient of $(X\times X, \Delta_1+\Delta_2)$ by $\mathbb{Z}/n\mathbb{Z}$ generated by $\Psi_k$. As $\Psi_k$ fixes no divisor of $X\times X$, $K_Y+\Gamma$ is numerically trivial.

\begin{theorem}\label{mainhomeomorphism}
The dual complex of a dlt minimal model of $(Y,\Gamma)$ is homeomorphic to the Lens space $L(n,k)$.
\end{theorem}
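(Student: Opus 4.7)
My plan is to invoke Propositions \ref{dlt} and \ref{quotient} for the snc log Calabi-Yau pair $(X\times X, \Delta_1+\Delta_2)$ after performing the blowups needed to barycentrically subdivide its dual complex, and then to transport the resulting quotient along the explicit homeomorphism $F_n$ of the previous proposition in order to identify it with the standard model of $L(n,k)$.

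First I would identify $\mathcal{D}(\Delta_1+\Delta_2)$ with $M_n$: the $2n$ boundary components are the pullbacks of the components of $\Delta$ under the two projections, and intersections in the product are products of intersections in the factors, yielding exactly the cell structure of $M_n$. The generator $\Psi_k=(\psi,\psi^k)$ acts on vertices by $\Delta_{1,i}\mapsto\Delta_{1,i+1}$ and $\Delta_{2,j}\mapsto\Delta_{2,j+k}$, and this action is free on $|M_n|$: if a nontrivial power $\Psi_k^m$ preserved some cell setwise, then the shift $i\mapsto i+m$ would have to preserve a pair $\{i,i+1\}\subset\mathbb{Z}/n$, forcing $m\equiv 0\pmod n$.

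With freeness established, I would replace $(X\times X,\Delta_1+\Delta_2)$ by the snc pair $(X',\Delta')$ whose dual complex is the barycentric subdivision of $M_n$, obtained by the sequence of stratum blowups of \cite[Remark 10]{deFernexKollarXu2012}. These blowups are crepant, so $(X',\Delta')$ remains snc log Calabi-Yau, and the lifted $\Psi_k$-action on $\mathcal{D}(\Delta')$ is still free. Propositions \ref{dlt} and \ref{quotient} then produce a dlt quotient $(Y',\Gamma')$ with $|\mathcal{D}(\Gamma')|=|M_n|/\langle\Psi_k\rangle$. Since $\Psi_k$ fixes no boundary divisor, the quotient is \'etale in codimension one, so $K_{Y'}+\Gamma'$ remains numerically trivial; this exhibits $(Y',\Gamma')$ as a dlt minimal model of $(Y,\Gamma)$, and the theorem reduces to identifying $|M_n|/\langle\Psi_k\rangle$ with $L(n,k)$.

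Finally I would transport the action along $F_n\colon L_n\to S^3$. A direct computation from the formula for $F^{i,j}_n$ shows that the shift $i\mapsto i+1$ (respectively $j\mapsto j+k$) absorbs into the twist factor $e^{2\pi\mathrm{i}\,i/n}$ (respectively $e^{2\pi\mathrm{i}\,j/n}$) to multiply the first (respectively second) coordinate of $S^3\subset\mathbb{C}^2$ by $\zeta_n=e^{2\pi\mathrm{i}/n}$ (respectively $\zeta_n^k$), while leaving the factor $f_n$ unchanged. Thus $F_n$ conjugates $\Psi_k$ to the standard generator $(z,w)\mapsto(\zeta_n z, \zeta_n^k w)$, whose quotient is by definition $L(n,k)$. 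I expect the main technical obstacle to be the careful handling of boundary strata where some $Y_{\mu,i}$ vanish, which requires exactly the same case analysis used earlier to show $F_n$ is well defined; no new ideas are needed there.
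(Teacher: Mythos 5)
Your proposal is correct and follows essentially the same route as the paper: identify $\mathcal{D}(\Delta_1+\Delta_2)$ with $M_n$, pass to the barycentric subdivision via the stratum blowups, apply Propositions \ref{dlt} and \ref{quotient} to get a dlt quotient whose dual complex is $|M_n|/\langle\Psi_k\rangle$, and use $F_n$ to conjugate $\Psi_k$ to the standard generator $(z,w)\mapsto(\zeta_n z,\zeta_n^k w)$. Your explicit verification that the action is free on $|M_n|$ and that the quotient is crepant is a welcome addition of detail that the paper leaves implicit.
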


\begin{proof}
We identify the dual complex of $(X\times X, \Delta_1+\Delta_2)$ with $L_n$, and the action of $\Psi_k$ sends $Y_{1,i}$ to $Y_{1,i+1}$, and $Y_{2,j}$ to $Y_{2,j+k}$. We apply $F_n$ to identify this action with an action on $S^3$. Let $(z_1,z_2)=F_n(P)$. Then
\[F_n(\Psi_k(P))=(e^{2\pi \mathrm{i} /n }z_1, e^{2\pi k \mathrm{i}/n}z_2).
\]
The quotient of $S^3$ under this action is the Lens space $L(n,k)$.

Let $(Z, \Delta')$ be the iterated blow up of $(X\times X, \Delta_1+\Delta_2)$, starting with the $0$-dimensional strata and at the $i$th stage blowing up along the strict transform of the dimension $i$ strata of $\Delta_1+\Delta_2$. We identify $\mathcal{D}(\Delta')$ with the barycentric subdivision of $\mathcal{D}(\Delta_1+\Delta_2)$. Then by Proposition \ref{dlt}, the quotient $(Y',\Gamma')$ of $(Z,\Delta')$ is dlt, and by Proposition \ref{quotient}, $\mathcal{D}(\Gamma')$ is identified with $L(n,k)$. The pair $(Y,\Gamma')$ is a dlt minimal model of $(Y,\Gamma)$.
\end{proof}

We can construct the Lens space $L(3,1)$ by taking $X$ to be $\mathbb{P}^2$, $D$ the sum of the three coordinate lines and letting $\mathbb{Z}/3\mathbb{Z}$ act by cyclic action on the coordinates.

For $L(5,1)$, and $L(5,2)$, we consider $X=\mathbb{P}^2$ blown up at $4$ points. Take $L$ for the class of a pullback of a line, and $\Delta_i$ for the exceptional divisors.

Then the following $5$ classes each admit a $-1$ curve, and together form an anticanonical cycle, such that the dual complex is the standard cycle:
\begin{enumerate}
\item $\Delta_1=L-E_2-E_4$
\item $\Delta_2=L-E_1-E_3$
\item $\Delta_3=E_3$
\item $\Delta_4=L-E_2-E_3$
\item $\Delta_5=E_2$
\end{enumerate}

In fact there is an action of $\mathbb{Z}/5\mathbb{Z}$ on the pair $(X,\Delta)$ which is transitive on the $\Delta_i$. This comes from the realization of $X$ as the moduli space $\overline{\mathcal{M}_{0,5}}$ \cite{Kapranov}. This moduli space admits the action of $\mathbb{Z}/5\mathbb{Z}$ via the cyclic permutation of the $5$ marked points on $\mathbb{P}^1$.

Each $-1$ curve in $X$ corresponds to a locus $A_{i,j}$ corresponding to stable rational curves with the marked points $p_i$ and $p_j$ on the same component with exactly one other special point, corresponding to where that component intersects the next component of the stable curve. The induced action on the $A_{i,j}$ apply the cyclic permutations to the indices $1, \ldots 5$. To show the action is transitive on $\mathcal{D}(\Delta)$, we must identify the $\Delta_i$ with the $A_{i,j}$.

\begin{proposition}
The locus $\Delta_i$ corresponds to the locus $A_{i, i+2}$
\end{proposition}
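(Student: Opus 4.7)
The plan is to invoke Kapranov's explicit identification of $\overline{\mathcal{M}_{0,5}}$ with the blow-up $X$ of $\mathbb{P}^2$ at four points $p_1, p_2, p_3, p_4$ in general linear position, and then match each of the five listed classes to its corresponding boundary divisor directly.

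The first step is to set up the Kapranov dictionary between the ten $(-1)$-curves of $X$ and the ten boundary divisors of $\overline{\mathcal{M}_{0,5}}$. The exceptional divisor $E_i$ over $p_i$ is identified with $A_{i,5}$, while the strict transform of the line through $p_i$ and $p_j$ is identified with $A_{k,l}$, where $\{k,l\} = \{1,2,3,4\} \setminus \{i,j\}$. The geometric reason: a generic point of $A_{k,l}$ (with $k,l \in \{1,\ldots,4\}$) represents a stable curve whose two components carry the marked sets $\{k,l\}$ and $\{i,j,5\}$; under the Kapranov map (which records the relative position of the fifth marked point), such a point lies on the line through $p_i$ and $p_j$, and as $k,l$ vary along this stratum the image sweeps out the entire line.

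With this dictionary in hand the verification is mechanical: $\Delta_1 = L - E_2 - E_4 = A_{1,3}$, $\Delta_2 = L - E_1 - E_3 = A_{2,4}$, and $\Delta_4 = L - E_2 - E_3 = A_{1,4}$, while $\Delta_3 = E_3 = A_{3,5}$ and $\Delta_5 = E_2 = A_{2,5}$. In each case this reads as $A_{i, i+2}$ with indices modulo $5$, as claimed.

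The main obstacle is purely bookkeeping: establishing the two correspondence rules with the correct indexing conventions. One could attempt a Kapranov-free approach by matching intersection patterns, since both $\{\Delta_i\}$ and $\{A_{i, i+2}\}$ form pentagons inside the incidence graph of boundary divisors (the Petersen graph). However, this graph contains many $5$-cycles, so the intersection data alone does not pin down the correspondence; one would then have to combine it with equivariance under the cyclic $\mathbb{Z}/5\mathbb{Z}$ action to single out the correct identification.
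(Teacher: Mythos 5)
Your proof is correct and follows essentially the same route as the paper: both identify $X$ with $\overline{\mathcal{M}_{0,5}}$ via Kapranov, establish the dictionary $E_i = A_{i,5}$ and $L - E_i - E_j = A_{k,l}$ with $\{k,l\}$ the complementary pair in $\{1,2,3,4\}$, and then check the five classes directly. The paper phrases the dictionary through the pencil of conics realizing the forgetful map $\overline{\mathcal{M}_{0,5}}\to\overline{\mathcal{M}_{0,4}}$, while you justify it from the Kapranov map itself, but the content is the same.
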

\begin{proof}
We interpret the map to $\mathbb{P}^1$ induced by the pencil of conics through the $4$ points as the total family $\overline{\mathcal{M}_{0,5}}\to \overline{\mathcal{M}_{0,4}}$ \cite{Kapranov}. The four sections represent the four marked points, and the point of $X$ itself is the location of the fifth point. When this point is on one of the $E_i$, we must create a new component of the fiber containing both $p_i$ and $p_5$. Thus $E_i=A_{i,5}$. On the other hand, for the other components each $L-E_i-E_j$ corresponds to the $A_{k,l}$ where $k$ and $l$ are the remaining indices from the first four in order.

We can then confirm our list of $\Delta_i$ correspond exactly the the $A_{i,i+2}$.
\end{proof}

As a result applying Theorem \ref{mainhomeomorphism} we may produce a cyclic action on $(X\times X, \pi_1^*(\Delta)+\pi_2^*(\Delta))$ such that the resulting log canonical log Calabi-Yau pair has dual complex which is a Lens space, either $L(5,1)$ or $L(5,2)$ depending on how we construct the action.

\section{Comments}\label{Discussion}

Our examples of log Calabi-Yau $4$-fold pairs with dual complex $L(p,q)$ require a surface log Calabi-Yau pair with action of $\mathbb{Z}/p\mathbb{Z}$. The condition of having a rational surface with an action of a finite cyclic group is quite restrictive. If we restrict to groups of prime order such actions exist only for $p\leq 5$ \cite{Zhangauto}.

In higher dimensions not every rationally connected variety is rational, so we cannot even use the study of the Cremona group to rule out examples. Still, I expect not every Lens space can be constructed as the dual complex of a log Calabi-Yau $4$-fold pair.

\begin{conjecture}\label{conjprimes}
For sufficiently large prime $p$, there is no dlt log-Calabi-Yau $4$-fold pair whose dual complex is homeomorphic to $L(p,k)$ for any $k$.
\end{conjecture}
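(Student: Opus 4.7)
The plan is to reverse-engineer the construction of Section~\ref{mainc}: given a hypothetical dlt log Calabi-Yau $4$-fold $(Y,\Gamma)$ with $\mathcal{D}(\Gamma^{=1}) \cong L(p,k)$, first produce a Galois crepant cover $(X,\Delta) \to (Y,\Gamma)$ of order $p$ whose dual complex is the universal cover $S^3$; then show that $(X,\Delta)$ carries enough structure for the $\mathbb{Z}/p\mathbb{Z}$-action to descend to an action on a rational surface; and finally invoke Zhang's theorem \cite{Zhangauto} to rule out $p \geq 7$.

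For the first step, I would use the identification between the fundamental group of the dual complex and the Galois group of crepant covers in the Calabi-Yau setting: since $\pi_1(L(p,k)) = \mathbb{Z}/p\mathbb{Z}$, there should be a corresponding $\mathbb{Z}/p\mathbb{Z}$-cover $(X,\Delta) \to (Y,\Gamma)$ that is dlt log Calabi-Yau, \'etale in codimension one, and whose dual complex is free as a $\mathbb{Z}/p\mathbb{Z}$-complex with quotient $L(p,k)$. Since $(X,\Delta)$ is still a dlt log Calabi-Yau $4$-fold, \cite{KollarXu} says its dual complex is a quotient of $S^3$; combined with simple-connectivity of the cover, this forces $\mathcal{D}(\Delta^{=1})$ to be $S^3$ exactly.

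For the second step, one needs to show the $\mathbb{Z}/p\mathbb{Z}$-action on $(X,\Delta)$ forces an induced action on a lower-dimensional log Calabi-Yau pair. The most natural route is an equivariant descent along the orbit of a divisorial stratum of $\Delta^{=1}$: each such divisor is itself dlt log Calabi-Yau of dimension $3$ with dual complex $S^2$, acted on by a subgroup of $\mathbb{Z}/p\mathbb{Z}$, and iterating down produces a log Calabi-Yau surface pair with an action of some subgroup of $\mathbb{Z}/p\mathbb{Z}$. If one can arrange that at some stage the full $\mathbb{Z}/p\mathbb{Z}$ acts faithfully on a rational surface, Zhang's theorem closes the argument for $p \geq 7$.

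The main obstacle is arranging the descent so that the full group $\mathbb{Z}/p\mathbb{Z}$ continues to act faithfully. \emph{A priori} the stabilizer of a chosen divisor could be the full group or trivial, and in the trivial-stabilizer case one must pass to the full orbit, potentially losing information about the action; the combinatorics of $S^3$ as a $\mathbb{Z}/p\mathbb{Z}$-complex only tells us that the action on cells is free, not how it interacts with the subdivisorial structure. Some new input is likely needed here---perhaps boundedness of Fano-type $4$-folds combined with a quantitative refinement of \cite{KollarXu} controlling how strata of fixed combinatorial type can be permuted---consistent with the author's remark that even $p = 7$ remains open.
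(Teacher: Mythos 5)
The statement you are asked to prove is a \emph{conjecture}: the paper offers no proof, and explicitly records that even the first case $p=7$ is open. So there is no ``paper's proof'' to compare against, and any complete argument you gave would be new mathematics. Your proposal is an honest sketch of the natural attack, and its first step is sound and in fact appears verbatim in the paper's closing discussion: by \cite[Thm 2.5]{KollarXu}, a dlt $4$-fold pair with dual complex $L(p,k)$ admits a quasi-\'etale crepant $\mathbb{Z}/p\mathbb{Z}$-cover $(X,\Delta)$ whose dual complex is the universal cover $S^3$.

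The genuine gap is your second step, and it is worse than you suggest. Because $S^3 \to L(p,k)$ is the universal covering, the $\mathbb{Z}/p\mathbb{Z}$-action on $S^3$, hence on the dual complex, is \emph{free}: for $p$ prime this means every stratum of $\Delta^{=1}$ has \emph{trivial} stabilizer, never the full group. So restricting to a divisorial stratum (a $3$-fold log Calabi-Yau pair with dual complex $S^2$) yields no group action at all; the group merely permutes the $p$ divisors in an orbit, and the ``descent'' transmits no information about automorphisms of lower-dimensional pairs. Moreover, there is no a priori reason the hypothetical $4$-fold is built from surface data: the paper's construction goes from surfaces with a $\mathbb{Z}/p\mathbb{Z}$-action \emph{up} to $4$-folds, and the conjecture is precisely the assertion that no essentially different construction exists. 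Zhang's theorem \cite{Zhangauto} constrains cyclic actions on rational surfaces, but nothing in your argument produces such an action from the $4$-fold. Your closing paragraph correctly identifies this as the missing input; as written, the proposal establishes nothing beyond the reduction already stated in the paper's Section \ref{Discussion}.
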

The first open case is $p=7$.

Our examples always admit a dlt minimal model $(Y',\Gamma')$ where every component of $\Gamma'$ has coefficient $1$. As Lens spaces are orientable, by \cite[Prop 5.5]{filipazzi2022indexcoregularityzerolog} we must have $K_{Y'}+\Gamma'\sim 0$. On the other hand the quotient construction implies $Y'$ admits a torsion divisor whose algebra of sections induces the cover $X'$, likewise for $Y$.

Any example of a dlt $4$-fold pair $(Y',\Gamma')$ with $\mathcal{D}(\Gamma')$ homeomorphic to $L(p,q)$ must admit a quasi-{\'e}tale cover $(\tilde{Y'},\tilde{\Gamma'})$, corresponding to the universal cover of $L(p,q)$ by the sphere $S^3$, by \cite[Thm 2.5]{KollarXu}.

\bibliographystyle{amsalpha}      
\bibliography{References}

\providecommand{\bysame}{\leavevmode\hbox to3em{\hrulefill}\thinspace}
\providecommand{\MR}{\relax\ifhmode\unskip\space\fi MR }
\providecommand{\MRhref}[2]{%
  \href{http://www.ams.org/mathscinet-getitem?mr=#1}{#2}
}
\providecommand{\href}[2]{#2}
\begin{thebibliography}{dFKX17}

\bibitem[Bre93]{Bredon}
Glen~E. Bredon, \emph{Topology and geometry}, Graduate Texts in Mathematics,
  vol. 139, Springer-Verlag, New York, 1993. \MR{1224675}

\bibitem[Bro22]{BrownDC}
Morgan~V. Brown, \emph{The dual complex of a semi-log canonical surface}, Int.
  Math. Res. Not. IMRN (2022), no.~10, 7495--7515. \MR{4418713}

\bibitem[dFKX17]{deFernexKollarXu2012}
T.~{de}~{Fernex}, J.~Koll{\'a}r, and C.~Xu, \emph{The dual complex of
  singularities}, Higher dimensional algebraic geometry, in honour of Professor
  Yujiro Kawamatas 60th birthday, vol.~74, Adv. Stud. Pure Math., December
  2017, pp.~103--130.

\bibitem[FMM22]{filipazzi2022indexcoregularityzerolog}
Stefano Filipazzi, Mirko Mauri, and Joaquín Moraga, \emph{Index of
  coregularity zero log calabi-yau pairs}, 2022.

\bibitem[Hat02]{Hatcher}
A.~Hatcher, \emph{Algebraic topology}, Cambridge University Press, Cambridge,
  2002.

\bibitem[Kap93]{Kapranov}
M.~M. Kapranov, \emph{Veronese curves and {G}rothendieck-{K}nudsen moduli space
  {$\overline M_{0,n}$}}, J. Algebraic Geom. \textbf{2} (1993), no.~2,
  239--262. \MR{1203685}

\bibitem[KX16]{KollarXu}
J.~Koll\'ar and C.~Xu, \emph{The dual complex of {C}alabi-{Y}au pairs}, Invent.
  Math. \textbf{205} (2016), no.~3, 527--557.

\bibitem[Pay13]{Payne}
Sam Payne, \emph{Boundary complexes and weight filtrations}, Michigan Math. J.
  \textbf{62} (2013), no.~2, 293--322. \MR{3079265}

\bibitem[Zha02]{Zhangauto}
D.-Q. Zhang, \emph{Automorphisms of finite order on {G}orenstein del {P}ezzo
  surfaces}, Trans. Amer. Math. Soc. \textbf{354} (2002), no.~12, 4831--4845.
  \MR{1926853}

\end{thebibliography}
\end{document}